\title{A constant lower bound for the union-closed sets conjecture}
\author{Justin Gilmer\thanks{gilmer@google.com}}
\affil{Google Research, Brain Team}
\begin{document}

\maketitle

\newcommand{\aintb}[2]{\frac{| #1 \cap #2 |}{|#1|}}
\newcommand{\e}{\mathop{\mathbb{E}}}
\newcommand{\f}{\mathcal{F}}
\newcommand{\bound}{\frac{3 - \sqrt{5}}{2}}
\newcommand{\weakbound}{0.1}
\newcommand{\weakboundp}{0.01}
\newcommand{\hu}[2]{H(#1 + #2 - #1 #2)}
\newcommand{\hup}{\hu{p_c}{p_{c'}}}
\newcommand{\czero}{\mathcal{C}_0}
\newcommand{\cone}{\mathcal{C}_1}
\newcommand{\constant}{1.26}

\newcommand{\x}{X}

\newtheorem{theorem}{Theorem}
\newtheorem{corollary}{Corollary}
\newtheorem{conjecture}{Conjecture}
\newtheorem{lemma}{Lemma}
\newtheorem{claim}{Claim}

\begin{abstract}
  We show that for any union-closed family $\f \subseteq 2^{[n]}, \f \neq \{\emptyset\}$, there exists an $i \in [n]$ which is contained in a $\weakboundp$ fraction of the sets in $\f$. This is the first known constant lower bound, and improves upon the $\Omega(\log_2(|\f|)^{-1})$ bounds of Knill and W\'{o}jick. Our result follows from an information theoretic strengthening of the conjecture. Specifically, we show that if $A, B$ are independent samples from a distribution over subsets of $[n]$ such that $Pr[i \in A] < \weakboundp$ for all $i$ and $H(A) > 0$, then $H(A \cup B) > H(A)$.
\end{abstract}

\section{Introduction}

\begin{sloppypar}

We study families of finite sets which are \emph{union-closed}. A family $\f \subseteq 2^{[n]}$ is said to be \emph{union-closed} if for every $A, B \in \f$ the set $A \cup B \in \f$. Frankl in 1979 \cite{frankl1995extremal} conjectured that any such family $\f \neq \{\emptyset\}$ should contain an \emph{abundant element}---that is an $i \in [n]$ which is contained in at least half of the sets in $\f$. Due to the simplicity of the problem statement, the union-closed conjecture has received substantial interest over the past 40 years, with over 50 publications proving special cases or providing reformulations of the problem \cite{bruhn2015journey}. The problem was also explored in Polymath11 \cite{polymath}, which considered several interesting strengthenings to the conjecture, some of which were shown to be false. The best prior bound which does not place additional assumptions on $\f$ is due to Knill \cite{knill1994graph} (with improvement by W\'{o}jick \cite{wojcik1999union}), who proves that there is an element contained in at least $\Omega(\frac{|\f|}{\log_2(|\f|)})$ sets. Some special cases are known which make strong assumptions on the family $\f$. For example Balla, Bollab\'{a}s, and Eccles \cite{balla2013union} show the conjecture holds when $|\f| \geq \frac{2}{3}2^n$. This was later improved by Karpas \cite{karpas2017two} under the assumption that $|\f| \geq 2^{n-1}$. We refer the interested reader to the survey of Bruhn and Schaudt \cite{bruhn2015journey} for an in depth survey of prior work on the problem. 

In this work, we prove the following theorem.

\begin{theorem}\label{thm:t1}
Let $A$ and $B$ denote independent samples from a distribution over subsets of $[n]$. Assume that for all $i \in [n]$, $Pr[i \in A] \leq \weakboundp$. Then $H(A \cup B) \geq  \constant H(A)$.
\end{theorem}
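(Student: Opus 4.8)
The plan is to show the entropy inequality holds pointwise along a clever coupling of $A$ and $B$, reducing everything to a one-variable inequality about the binary entropy function. I would start by writing $A$ and $B$ as vectors of indicator bits $(A_1,\dots,A_n)$ and $(B_1,\dots,B_n)$, and consider building $A \cup B$ bit by bit, conditioning on a well-chosen ordering. The key device is to compare $H(A \cup B)$ with $H(A)$ by revealing the coordinates of $A \cup B$ one at a time while simultaneously tracking the corresponding coordinates of $A$; the subadditivity/chain rule gives $H(A \cup B) = \sum_i H\big((A\cup B)_i \mid (A\cup B)_{<i}\big)$ and $H(A) = \sum_i H\big(A_i \mid A_{<i}\big)$, so it suffices to control each term.

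The heart of the argument should be a local inequality: if $X$ is a Bernoulli random variable with mean $p \le \weakboundp$ and $Y$ is an independent copy, then the contribution of the union coordinate dominates a $\constant$-multiple of the contribution of the $A$ coordinate, \emph{provided} one accounts correctly for conditioning. Concretely, $(A\cup B)_i$ is Bernoulli with parameter $2p_i - p_i^2$ where $p_i = \Pr[i \in A]$, and since $p_i$ is small, $H(2p_i - p_i^2)$ is close to $2H(p_i)$ — this is where the constant $\constant$ (rather than $2$) comes from, as we lose a bit due to conditioning on the prefix and due to the $p_i^2$ correction. I would isolate the elementary claim that for $p \in [0, \weakboundp]$ one has $H(2p - p^2) \ge \constant \, H(p)$, or more precisely a conditional version thereof that survives the chain-rule bookkeeping, and verify it by calculus (checking the ratio $H(2p-p^2)/H(p)$ is minimized at the endpoint $p = \weakboundp$ or as $p \to 0$).

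The main obstacle I anticipate is the \emph{conditioning}: the coordinates of $A$ need not be independent, so $H(A) \ne \sum_i H(A_i)$ in general, and after conditioning on a prefix of $A \cup B$ the conditional marginals can be large even if the unconditional ones are at most $\weakboundp$. To handle this I would look for an ordering (or a coupling) of the coordinates under which the relevant conditional probabilities stay bounded, or else pass to the information-theoretic formulation directly: express $H(A \cup B) - \constant H(A)$ as a sum of conditional mutual-information-like terms and bound each using the hypothesis $\Pr[i \in A] \le \weakboundp$ together with independence of $A$ and $B$. A natural tool here is to introduce an auxiliary random variable — for instance, for each coordinate let $C_i$ record which of $A_i, B_i$ (if either) equals $1$ — and use that $C$ together with suitable marginals reconstructs both $A$ and $A \cup B$, turning the inequality into a comparison of two decompositions of $H(C)$. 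Getting the constant to be a genuine constant bounded away from $1$ (as opposed to something decaying in $n$) will hinge on making this per-coordinate loss multiplicative rather than additive, which is exactly what the smallness assumption $\Pr[i\in A]\le\weakboundp$ buys us.
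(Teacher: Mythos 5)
Your opening moves match the paper exactly: write $A,B$ as indicator vectors, apply the chain rule, and aim for the per-coordinate inequality $H((A\cup B)_i \mid (A\cup B)_{<i}) \ge \constant\, H(A_i \mid A_{<i})$. You have also correctly identified the central obstacle: after conditioning on a prefix, the conditional marginal $\Pr[A_i = 1 \mid A_{<i} = c]$ can be arbitrarily large even though the unconditional marginal is at most $\weakboundp$, so the clean pointwise bound $H(2p-p^2) \ge \constant\, H(p)$ for $p \le \weakboundp$ is not the right local statement. However, none of the three remedies you sketch closes this gap. A reordering of coordinates that keeps conditional marginals small does not exist in general (Example 2 of the paper, $A = [n]$ w.p.\ $p$ and $A = \emptyset$ otherwise, has every conditional marginal equal to $0$ or $1$ after the first bit). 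The auxiliary variable $C_i$ recording which of $A_i, B_i$ is $1$ is a red herring: $H(C) > H(A \cup B)$, so comparing decompositions of $H(C)$ gives an inequality in the wrong direction. And ``expressing $H(A\cup B) - \constant H(A)$ as a sum of conditional mutual-information-like terms'' is, as written, just a restatement of the goal.

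The step you are missing is the paper's inequality $H((A\cup B)_i \mid (A\cup B)_{<i}) \ge H((A\cup B)_i \mid A_{<i}, B_{<i})$, which holds because $(A\cup B)_{<i}$ is a function of $(A_{<i}, B_{<i})$ (data processing). This replaces the awkward conditioning on the union prefix by conditioning on the two independent prefixes, after which $A_i$ and $B_i$ are conditionally independent Bernoullis with parameters $p_c, p_{c'}$ drawn i.i.d.\ from some distribution with $\mathbb{E}_c[p_c] \le \weakboundp$. The per-coordinate statement that then needs proving is not a pointwise bound but an \emph{averaged} one: $\mathbb{E}_{c,c'}[\hu{p_c}{p_{c'}}] \ge \constant\, \mathbb{E}_c[H(p_c)]$ under only the first-moment constraint $\mathbb{E}_c[p_c] \le \weakboundp$, with individual $p_c$ allowed anywhere in $[0,1]$. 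This is the paper's Lemma~\ref{lem:l5}, and it is genuinely nontrivial: Jensen fails because the integrand is not convex in $p_{c'}$, and the calculus you propose (minimizing $H(2p-p^2)/H(p)$ over $p \le \weakboundp$) says nothing about the tail where $p_c$ is large. The paper handles this by Markov's inequality (at most a $0.1$ fraction of the mass has $p_c > 0.1$), splitting into the cases both small / one large, and using concavity of $H$ plus the bound $H(p + p' - pp') \ge (1-p)H(p')$ to control each piece. Without this lemma---or some substitute handling the large-$p_c$ tail---the proof does not go through.
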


When $H(A) > 0$, Theorem~\ref{thm:t1} implies that $H(A \cup B) > H(A)$. Note that if we sample $A, B$ independently and uniformly at random from a union-closed family $\f$, then $H(A \cup B) \leq H(A)$. This follows because $A \cup B$ is a distribution over $\f$ and the entropy of a distribution over $\f$ is maximized when it is the uniform distribution. We obtain as an immediate corollary

\begin{theorem}\label{thm:t2}
Let $\f \subseteq 2^{[n]}$ be a union-closed family, $\f \neq \{\emptyset\}$. Then there exists $i \in [n]$ that is contained in at least a $\weakboundp$ fraction of the sets in $\f$.
\end{theorem}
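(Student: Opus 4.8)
The final statement, Theorem~\ref{thm:t2}, follows from Theorem~\ref{thm:t1} by the argument already sketched above: if $\f\neq\{\emptyset\}$ is union-closed and no $i\in[n]$ lies in a $\weakboundp$ fraction of its members, then independent uniform samples $A,B$ from $\f$ satisfy $\Pr[i\in A]\le\weakboundp$ for every $i$, while $A\cup B$ is supported on $\f$, so $H(A\cup B)\le\log_2|\f|=H(A)$; this contradicts Theorem~\ref{thm:t1} unless $H(A)=0$, i.e.\ $\f=\{S\}$ for a single nonempty set $S$ --- where any $i\in S$ lies in all of $\f$, contradicting the assumption all the same. So the content is Theorem~\ref{thm:t1}, and the plan is to reduce it, via the entropy chain rule applied coordinate by coordinate, to a single analytic inequality about distributions on $[0,1]$.

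Here is the reduction. Write $A=(A_1,\dots,A_n)$ with $A_i=\mathbf 1[i\in A]$ (similarly $B_i$), set $C_i=A_i\vee B_i$ so that $A\cup B=(C_1,\dots,C_n)$, and abbreviate $A_{<i}=(A_1,\dots,A_{i-1})$. By the chain rule $H(A)=\sum_i H(A_i\mid A_{<i})$ and $H(A\cup B)=\sum_i H(C_i\mid C_{<i})$; since $C_{<i}$ is a deterministic function of $(A_{<i},B_{<i})$, conditioning on the latter only lowers entropy, so $H(C_i\mid C_{<i})\ge H(C_i\mid A_{<i},B_{<i})$. Fixing $i$ and conditioning on $(A_{<i},B_{<i})=(a,b)$, independence of $A$ and $B$ makes $A_i$ and $B_i$ independent Bernoulli variables with parameters $\alpha(a):=\Pr[A_i=1\mid A_{<i}=a]$ and $\alpha(b)$ (the same function, since $A$ and $B$ are identically distributed), so $C_i$ is Bernoulli with parameter $\alpha(a)+\alpha(b)-\alpha(a)\alpha(b)$. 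Writing $P_i$ for the $[0,1]$-valued random variable $\alpha(A_{<i})$ and $P_i'$ for an independent copy, and averaging over $(a,b)$ (which are independent since $A_{<i}\perp B_{<i}$), we get $H(C_i\mid A_{<i},B_{<i})=\e[H(P_i+P_i'-P_iP_i')]$ and $H(A_i\mid A_{<i})=\e[H(P_i)]$, where crucially $\e[P_i]=\Pr[i\in A]\le\weakboundp$. Summing over $i$, Theorem~\ref{thm:t1} reduces to the following \emph{Key Lemma}: if $P,P'$ are i.i.d.\ and $[0,1]$-valued with $\e[P]\le\weakboundp$, then $\e[H(P+P'-PP')]\ge\constant\,\e[H(P)]$.

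Proving the Key Lemma is the crux. First note there is no usable pointwise bound $H(x+y-xy)\ge\lambda\bigl(H(x)+H(y)\bigr)$ with $\lambda\ge\tfrac12$, since concavity of $H$ already forces $H(2t-t^2)<2H(t)$; the lemma must genuinely exploit both the i.i.d.\ structure and the constraint $\e[P]\le\weakboundp$. The heuristic is that near $0$ one has $H(t)\approx t\log_2(1/t)$, so if $P$ and $P'$ are typically tiny then $P+P'-PP'\approx P+P'$ and the ``OR'' roughly doubles the per-coordinate entropy --- and the mean bound forces $P$ to be typically tiny, the gain tending to $2$ as $\e[P]\to0$, which leaves ample room for the constant $\constant$. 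To make this rigorous I would (i) reduce to $P$ supported on $O(1)$ points by a standard local-perturbation argument on an extremal distribution (both functionals being weakly continuous, as $H$ is continuous and bounded on $[0,1]$, and the set of laws with mean $\le\weakboundp$ being compact); (ii) separate the contribution of atoms of $P$ near $1$ --- where $H(P)=H(1-P)\approx(1-P)\log_2\tfrac1{1-P}$ and $P+P'-PP'$ is also near $1$, so that piece is controlled directly --- from the bulk, which one handles via concavity and the crude bound $H(x+y-xy)\ge H(x)+H(y)-(x+y-xy)\log_2 3$; and (iii) finish with a low-dimensional optimization, the extremal configurations presumably being two-point distributions $P\in\{0,q\}$. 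The hard part will be (ii)--(iii): making the ``typically tiny'' heuristic uniform over \emph{all} distributions with $\e[P]\le\weakboundp$, since the crude additive bound is lossy exactly when $P$ concentrates near $0$ or $1$ and $\e[H(P)]$ is small, where one must nonetheless certify a ratio of at least $\constant$. I expect the argument is cleaner with the (far from tight) bound $\weakboundp$ as stated than with the essentially optimal threshold $\bound$, trading a weaker constant for robustness.
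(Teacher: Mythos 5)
Your deduction of Theorem~\ref{thm:t2} from Theorem~\ref{thm:t1} is exactly the paper's: sample $A,B$ independently and uniformly from $\f$, so if every $i$ has frequency below $\weakboundp$ then $\Pr[i\in A]<\weakboundp$; union-closedness forces $H(A\cup B)\le\log|\f|=H(A)$; Theorem~\ref{thm:t1} then forces $H(A)=0$, i.e.\ $\f$ is a single nonempty set, which trivially has an abundant element. Your chain-rule reduction of Theorem~\ref{thm:t1} to the single-coordinate inequality, via $H((A\cup B)_i\mid (A\cup B)_{<i})\ge H((A\cup B)_i\mid A_{<i},B_{<i})$, is also the paper's reduction essentially verbatim, and your ``Key Lemma'' is a restatement of the paper's Lemma~\ref{lem:l5}.

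Where you depart is in how you would prove that lemma. You sketch an extremal/compactness route: a minimizer of the ratio exists by weak compactness of the laws on $[0,1]$ with $\e[P]\le\weakboundp$ and weak continuity of both sides; local perturbations would reduce it to $O(1)$ atoms; and an additive bound $H(x+y-xy)\ge H(x)+H(y)-(x+y-xy)\log_2 3$ plus a finite optimization would finish. You flag steps (ii)--(iii) as the hard part, and indeed they are only a plan here: the reduction to finite support and the closing optimization are not carried out, so this does not yet establish the Key Lemma. The paper's argument is quite different and entirely computational, with no variational step: set $\czero=\{c:p_c\le 0.1\}$ and $\cone=\czero^c$, use Markov's inequality (with $\e[\x]\le 0.01$) to get $\Pr[\cone]\le 0.1$, and apply two pointwise inequalities --- for $p,p'\le 0.1$, $\hu{p}{p'}\ge 1.4\cdot\tfrac{H(p)+H(p')}{2}$; for all $p,p'$, $\hu{p}{p'}\ge(1-p)H(p')$ --- to the $(\czero,\czero')$ and mixed $(\czero,\cone')$ events respectively, discarding $(\cone,\cone')$. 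Summing these two contributions already yields $\constant\,H(\x\mid C)$, with the constant explicit. In short: granting Theorem~\ref{thm:t1}, your proof of Theorem~\ref{thm:t2} is correct and identical to the paper's; your plan for the Key Lemma is a genuinely different approach that, as written, still has real gaps, whereas the paper's is a short self-contained casework.
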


We note that Theorem~\ref{thm:t1} operates in a more general setting than the union-closed conjecture as we allow $A$ to be sampled from an arbitrary probability distribution over a family $\f$. Consider the following illustrative examples.

{\bf Example 1:} Let $A = (A_1, A_2, \cdots, A_n)$ be a random subset of $[n]$ such that each $A_i$ are iid Bernoulli random variables with probability $p$. Then $H(A) = H(p)n$ and $H(A \cup B) = H(2p - p^2)n$.

{\bf Example 2:} Let $A = [n]$ with probability $p$ and $A = \emptyset$ with probability $1 - p$. Then  $H(A) = H(p)$ and $H(A \cup B) = H(2p - p^2)$.

In examples 1 and 2 the ratio $\frac{H(A \cup B)}{H(A)} = \frac{H(2p - p^2)}{H(p)}$. For these cases, when $p < \bound$, it follows that $H(A \cup B) > H(A)$. When $p=\bound$ then $H(A \cup B) = H(A)$ and when $p > \bound$ we get $H(A \cup B) < H(A)$. We hypothesize that these examples are extremal in the following sense: for any distribution $A$, if $Pr[A_i \leq p]$ for all $i$ then $H(A \cup B) \geq \frac{H(2p - p^2)}{H(p)} H(A)$.

The following example was useful in motivating some of the proof techniques we employ:

{\bf Example 3:} Sample $A \subseteq [n]$ in the following manner. First sample $A_1$ from a Bernoulli distribution with probability $p$. Then, conditioned on the event that $A_1 = 1$, sample each $A_i$ from iid Bernoulli distributions with probability $q = 0.99$. Otherwise, if $A_1 = 0$ then each $A_i = 0$. To calculate $H(A)$, we apply the chain rule to get $H(A) = H(A_1, A_{>1}) = H(A_1) + H(A_{> 1} | A_1)$. The conditional entropy can be computed as
\[H(A_{> 1} | A_1) = Pr[A_1 = 0] \cdot 0 + Pr[A_1 = 1]H(q)(n-1).\]

Thus $H(A) = H(p) + pH(q)(n-1)$. Via a similar calculation we get $H(A \cup B) = H(2p - p^2) + 2p(1-p)H(q)(n-1) + p^2 H(2q - q^2)(n-1)$.

In Example 3, for $n$ large and $p$ small, $H(A \cup B)$ is dominated by the term $2p(1-p)H(q)(n-1)$. This corresponds to the event that exactly one of $A_1, B_1$ is equal to $1$. It follows that $\frac{H(A \cup B)}{H(A)} \approx 2(1-p)$. Note in this case, the entropy $H(A \cup B | A_1 = B_1 = 1)$ is small relative to $H(A | A_1 = 1)$. We will discuss this example further in Section~\ref{sec:lemma}.

Examples 1 and 2 imply that if $Pr[A_i =1] \geq \bound$ then it is possible that $H(A \cup B) \leq H(A)$. Because $\bound < 0.5$, any stronger bound for Theorem~\ref{thm:t1} will not be sufficient to resolve the union-closed conjecture. In Section~\ref{sec:kl} we discuss a promising direction for additionally leveraging the assumption that $A$ is chosen uniformly over the family $\f$ which might improve the bound to 0.5.

\section{Notation and Preliminaries}
Throughout the paper we use $\log(x)$ to denote the base 2 logarithm of $x$. If $X, X'$ are Bernoulli random variables, we will use $X \cup X'$ to denote $\max(X, X')$. 

We quickly review two properties of conditional entropy that we require to complete the proofs. We refer the reader to Cover and Thomas \cite{cover1999elements} for additional background on information theory.

\begin{enumerate}
    \item {\bf Chain Rule for Entropy:} For a sequence of random variables $X_1, \cdots, X_n$, denote $X_{< i} = (X_1, \cdots, X_{i-1})$. Then $H(X_1, \cdots, X_n) = \sum\limits_i H(X_i | X_{<i})$.
    \item For random variables $X$ and $Y$ and a function $f(Y)$, \[H(X | Y) \leq H(X | f(Y)).\]
\end{enumerate}

We quickly prove property (2).
\begin{proof}
The sequence $X \rightarrow Y \rightarrow f(Y)$ forms a Markov chain. Thus by the data processing inequality:
\begin{align*}
    I(X: f(Y)) & \leq  I(X: Y) \\
    H(X) - H(X | f(Y)) & \leq  H(X) - H(X | Y) \\
    H(X | Y)  & \leq H(X | f(Y))
\end{align*}

\end{proof}

\section{Main Result}
In this section we prove our main result. We use $A_{<i} = (A_1, \cdots, A_{i-1})$ to denote the sequence of indicator random variables, where $A_i = 1$ if and only if $i \in A$. The proof strategy relies on revealing the bits of $A \cup B$ and $A$ one at a time and showing at each step that 
\begin{equation} \label{eq:main}
    H((A \cup B)_i | (A \cup B)_{<i}) \geq \constant H(A_i | A_{<i}).
\end{equation} 
By applying the chain rule this will imply that $H(A \cup B) \geq \constant H(A)$. 

The proof of equation (\ref{eq:main}) will rely on this key technical lemma, the proof of which is provided in Section~\ref{sec:lemma}.

\begin{lemma} \label{lem:l5}
Let $C$ denote a random variable over a finite set $S$. For each $c \in S$, let $p_c$ be a real number in $[0, 1]$. Let $\x$ be a Bernoulli random variable sampled according to the following process: first sample $c \sim C$, then sample $\x$ with  $Pr[\x = 1| C = c] = p_c$. Assume further that $\mathbb{E}[\x] \leq 0.01$. Let $C'$ be an iid copy of $C$, and sample $\x'$ conditioned on $C'$ according to the same process (so $Pr[\x' = 1| C' = c] = p_c$, and $\x'$ is independent of $\x$ and $C$). Then 
\[
    H(\x \cup \x' | C, C') \geq \constant H(\x | C).
\]
\end{lemma}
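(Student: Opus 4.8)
The plan is to reduce the statement to a one-variable inequality about Bernoulli entropies. Write $p = \mathbb{E}[\x] = \mathbb{E}_{c}[p_c] \le 0.01$. For the right-hand side, the chain rule gives $H(\x \mid C) = \mathbb{E}_{c \sim C}[H(p_c)]$. For the left-hand side, condition on the pair $(C, C') = (c, c')$; since $\x, \x'$ are then independent Bernoulli's with parameters $p_c, p_{c'}$, the variable $\x \cup \x'$ is Bernoulli with parameter $p_c + p_{c'} - p_c p_{c'}$, so
\[
H(\x \cup \x' \mid C, C') = \mathbb{E}_{c, c'}\bigl[\hu{p_c}{p_{c'}}\bigr],
\]
where $c, c'$ are independent copies of $C$. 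So the lemma is exactly the claim
\[
\mathbb{E}_{c,c'}\bigl[\hu{p_c}{p_{c'}}\bigr] \ge \constant \, \mathbb{E}_{c}[H(p_c)],
\]
and everything is now a statement about the distribution of the single random variable $p_C \in [0,1]$ with mean at most $0.01$.

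The natural way to attack this is pointwise: I would look for a function bound of the form $\hu{x}{y} \ge \constant\bigl(g(x) + g(y)\bigr)$ for some $g$ with $g(x) \le H(x)$ appropriately, or more directly try to show that the two-variable function $(x,y) \mapsto \hu{x}{y} - \frac{\constant}{2}(H(x) + H(y))$ is nonnegative on the relevant range. That cannot hold for all $x, y \in [0,1]$ (Examples 1 and 2 show the ratio degrades as the parameters grow), so the mean constraint $\mathbb{E}[p_C] \le 0.01$ must be used. The likely device is to split the expectation according to whether $p_c$ is ``small'' or ``large'': for $c$ with $p_c$ below some threshold $t$, use that $\hu{p_c}{p_{c'}} \ge H(p_c) + (\text{something})\,H(p_{c'})$ coming from the fact that $\cup$ with an independent small-probability bit roughly adds entropies (this is the $2(1-p)$ phenomenon in Example 3); for $c$ with $p_c \ge t$, use Markov's inequality — the mean bound forces $\Pr[p_C \ge t] \le 0.01/t$ to be tiny, so even a crude lower bound like $\hu{p_c}{p_{c'}} \ge \max(H(p_c), H(p_{c'}))$, combined with convexity/concavity estimates for $H$, suffices to absorb the loss. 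Tuning $t$ and the constant so that the worst case over all admissible distributions of $p_C$ still yields the factor $\constant$ is the crux.

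The main obstacle I expect is precisely this last optimization: the inequality is tight (or nearly so) at the extremal configurations suggested by Examples 1--3, so the pointwise/thresholding bounds have to be sharp enough not to lose a constant factor, which rules out the crudest estimates and forces careful use of concavity of $H$ near $0$, the subadditivity-type identity $H(x + y - xy) \le H(x) + H(y)$, and a quantitative lower bound on $\hu{x}{y}$ in terms of $H(x) + H(y)$ when at least one of $x,y$ is bounded away from the ``bad'' region. A secondary subtlety is that $p_c$ can be close to $1$ for a small-probability value of $c$ (as in Example 3 with $q = 0.99$), where $H(p_c)$ is small but $\hu{p_c}{p_{c'}}$ need not dominate $H(p_c)$ by the full factor on its own; one has to check that the contribution of the complementary event — where the other copy has a small parameter, which is the overwhelmingly likely case — compensates. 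I would handle this by always pairing the ``large $p_c$'' term with the expectation over $c'$, exploiting that $c'$ is an independent copy with the same small mean, rather than trying to bound the integrand for each fixed $(c,c')$ separately.
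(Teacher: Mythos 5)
Your plan matches the paper's argument closely: the reduction to $\mathbb{E}_{c,c'}\bigl[\hu{p_c}{p_{c'}}\bigr] \geq \constant\,\mathbb{E}_{c}[H(p_c)]$, the Markov-based threshold split on $p_c$ (the paper uses threshold $0.1$), a concavity bound of the form $\hu{p}{p'} \geq 1.4 \cdot \frac{H(p)+H(p')}{2}$ for the small--small block, pairing the large-$p_{c'}$ contribution with the overwhelmingly likely small $p_c$ via $\hu{p}{p'} \geq (1-p)H(p')$, and simply discarding the large--large block are exactly the steps the paper takes. One caveat: the pointwise bound $\hu{x}{y} \geq \max(H(x),H(y))$ you float in passing is false (e.g.\ $x=y=0.9$ gives $H(0.99) < H(0.9)$), but you already flag this and describe the correct remedy of pairing with the other (likely small) copy, so the plan is sound.
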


We note that Lemma~\ref{lem:l5} can be restated a bit more succinctly that assuming  $\{p_c\}_{c \in S} \subset [0,1]$ is a finite sequence of real numbers satisfying $\mathbb{E}_c[p_c] \leq 0.01$, then:

\[
    \mathbb{E}_{c, c'}\left[\hu{p_c}{p_{c'}}\right] \geq \constant \mathbb{E}_{c} \left[H(p_c)\right].
\]

Here, $\x, \x'$ correspond to the random bits $A_i, B_i$ respectively, and $C, C'$ correspond to the histories $A_{<i}, B_{<i}$. The constant $0.01$ was not optimized as new ideas will be needed to achieve a tight result. We hypothesize that if $\mathbb{E}[\x] < \bound$, then  $H(\x \cup \x' | C, C') \geq (1 + \epsilon) H(\x | C)$  for an $\epsilon > 0$ which depends on the value of $\mathbb{E}[\x]$. We discuss challenges in obtaining a stronger bound for Lemma~\ref{lem:l5} along with counter examples to natural strengthenings in Section~\ref{sec:lemma}. 

Assuming Lemma 1, we now prove our main result:

\begingroup
\def\thetheorem{\ref{thm:t1}}
\begin{theorem}
Let $A, B$ be independent samples from a distribution over subsets of $[n]$ such that $Pr[i \in A] \leq 0.01$ for all $i$. Then $H(A \cup B) \geq \constant H(A)$.
\end{theorem}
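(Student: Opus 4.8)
The plan is to reduce Theorem~\ref{thm:t1} to Lemma~\ref{lem:l5} by the chain-rule argument already outlined before the lemma statement. Concretely, I would write $H(A \cup B) = \sum_{i=1}^n H((A\cup B)_i \mid (A\cup B)_{<i})$ and $H(A) = \sum_{i=1}^n H(A_i \mid A_{<i})$, so it suffices to prove the per-coordinate inequality~\eqref{eq:main}, namely $H((A\cup B)_i \mid (A\cup B)_{<i}) \geq \constant\, H(A_i \mid A_{<i})$ for every $i$. Summing these $n$ inequalities then gives the theorem immediately.

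To prove~\eqref{eq:main} for a fixed $i$, I would first use the fact that $(A\cup B)_{<i}$ is a deterministic function of the pair $(A_{<i}, B_{<i})$ — indeed $(A\cup B)_j = \max(A_j, B_j)$ — and apply property~(2) of conditional entropy from the preliminaries to get
\[
H((A\cup B)_i \mid (A\cup B)_{<i}) \geq H\bigl((A\cup B)_i \mid A_{<i}, B_{<i}\bigr).
\]
Now I set $C = A_{<i}$ and $C' = B_{<i}$; since $A$ and $B$ are i.i.d., $C'$ is an i.i.d. copy of $C$. Set $\x = A_i$ and $\x' = B_i$, so that $(A\cup B)_i = \x \cup \x'$ and $H(A_i \mid A_{<i}) = H(\x \mid C)$. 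Define $p_c = \Pr[A_i = 1 \mid A_{<i} = c]$; then $\x$ is sampled from $C$ exactly by the process in the lemma, $\x'$ is the analogous conditionally-independent copy, and the hypothesis $\Pr[i \in A] \leq 0.01$ says precisely $\mathbb{E}[\x] = \mathbb{E}_c[p_c] \leq 0.01$. Lemma~\ref{lem:l5} then yields $H(\x \cup \x' \mid C, C') \geq \constant\, H(\x \mid C)$, which combined with the displayed inequality above gives exactly~\eqref{eq:main}.

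The only mild subtlety — and the one place to be careful rather than the genuinely hard part — is checking that the conditional distributions line up with the lemma's sampling model: one must verify that conditioned on $A_{<i} = c$ and $B_{<i} = c'$, the bits $A_i$ and $B_i$ are independent with the correct marginals $p_c$ and $p_{c'}$. This follows because $(A,B)$ is a product distribution, so conditioning on $A_{<i}$ affects only $A_i$ and conditioning on $B_{<i}$ affects only $B_i$, and the two halves remain independent. Once this bookkeeping is in place the argument is a one-line invocation of the lemma followed by the chain rule; all the real work is deferred to the proof of Lemma~\ref{lem:l5} in Section~\ref{sec:lemma}.
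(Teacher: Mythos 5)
Your proposal is correct and follows exactly the paper's own argument: chain rule to reduce to the per-coordinate inequality, property~(2) of conditional entropy to replace conditioning on $(A\cup B)_{<i}$ by conditioning on $(A_{<i}, B_{<i})$, and then Lemma~\ref{lem:l5} applied with $C = A_{<i}$, $C' = B_{<i}$, $\x = A_i$, $\x' = B_i$. The extra bookkeeping you note about the conditional distributions lining up is right and is implicit in the paper's proof.
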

\addtocounter{theorem}{-1}
\endgroup

\begin{proof}

We first show for all $i$, 
\[H((A \cup B)_i | (A \cup B)_{< i}) \geq \constant H(A_i | A_{< i}).\]

By applying property (2) of conditional entropy we get
\begin{equation}\label{eq:cond_key}
    H((A \cup B)_i | (A \cup B)_{< i}) \geq H((A \cup B)_i | A_{< i}, B_{<i}).
\end{equation}

We pause here to remark that (\ref{eq:cond_key}) is the crucial step which takes advantage of the power of the information theoretic formulation. Because $(A \cup B)_{<i}$ is simply a function of $A_{< i}, B_{<i}$, the entropy in $(A \cup B)_i$ can not increase if we additionally assume we know the full history of $A_{<i}, B_{<i}$. Conditioning on $A_{< i}, B_{< i}$ dramatically simplifies the analysis, as these are iid. Additionally, $A_i$ and $B_i$ are Bernoulli random variables whose distribution are determined by the sampled values of $A_{<i}$ and $B_{<i}$ respectively. Thus by Lemma~\ref{lem:l5} we conclude that
\begin{equation}
    H((A \cup B)_i | A_{< i}, B_{< i}) \geq \constant H(A_i | A_{< i}).
\end{equation}

To end the proof we repeatedly apply the chain rule to conclude that
\begin{equation}
    H(A \cup B) \geq \constant H(A).
\end{equation}
\end{proof}

\section{Proof of Lemma 1}\label{sec:lemma}

For this section, we can forget all of the structure contained in the random variables $A_{<i}$ and $B_{<i}$. Lemma 1 only assumes that they are iid over some finite set $S$. Recall that Lemma 1 can be stated as
\begin{equation} \label{eq:ineq}
    \mathbb{E}_{c, c'}\left[\hu{p_c}{p_{c'}}\right] \geq \constant \mathbb{E}_{c} \left[H(p_c)\right]
\end{equation}
under the assumption that $\mathbb{E}_c[p_c] \leq 0.01 = \mu$.

A natural approach to Lemma 1 is to try to apply Jensen's inequality to the function $f(p_c, p_{c'}) = \hu{p_c}{p_{c'}} - H(p_c)$. However, this $f$ is not convex in $p_{c'}$. Additionally, it does not hold in general that $\mathbb{E}_{c, c'} \left[\hu{p_c}{p_{c'}} - H(p_c)\right] \geq H(2\mu - \mu^2) - H(\mu)$. For example, conditioned on $C$ there may be no entropy left in $\x$, in which case the left hand side is 0! This is exactly what will happen in Example 2 discussed in the introduction---after revealing the first bit $A_1$, all subsequent bits become deterministic. This example demonstrates that some natural symmetrizations such as $g(p_c, p_{c'}) = \hu{p_c}{p_{c'}} - \frac{H(p_c) + H(p_{c'})}{2}$  are not convex.

Another natural approach is to look for a purely information theoretic proof of Lemma 1. Indeed, one hypothesis is that there is nothing special about the union function here, but for any function $f$, $H(f(\x, \x') | C, C') \geq H(\x | C)$ whenever $H(f(\x, \x')) \geq H(\x)$. However, this strengthening turns out to be false. Consider the case where both $\x$ and $C$ are uniform over the set $\{0, 1, 2, 3\}$. Furthermore, let $\x|C$ be uniform over $\{0, 2\}$ when $C \in \{0, 2\}$, and $\x|C$ be uniform over  $\{1, 3\}$ when $C \in \{1, 3\}$. Finally, define $f(x, x') = (x \mod 2, x' \mod 2)$. Then $H(f(\x, \x')) = H(\x) = \log(4)$, $H(\x | C) = 1$, but $H(f(\x, \x') | C, C') = 0$. Thus any proof of Lemma~\ref{lem:l5} will need to make careful use of properties of the union function. 

Having been unable to make the above two proof strategies work, we resort to a more direct estimation of the terms in inequality (\ref{eq:ineq}). Our argument is quite wasteful and surely is far from tight. First we provide a proof sketch. We let $\czero = \{c | p_c \leq 0.1\}$ and let $\cone = \czero^c$. 
 
 Using the assumption that $E[\x] \leq 0.01$ we apply Markov's inequality to get that
\begin{equation} \label{eq:eq3}
    Pr[c \in \cone] = Pr[p_c > 0.1] \leq \frac{E_{c}[p_c]}{0.1} \le 0.1
\end{equation}

This implies that $Pr[C_0] \geq 0.9$. In what follows we will sometimes write $\czero$ as shorthand for the event that $C \in \czero$. Similarly $\czero'$ refers to the event that $C' \in \czero$. For example, the conditional entropy $H(X |C)$ can be written as
 \[H(X | C) = Pr[\czero]H(X | \czero) + Pr[\cone]H(X | \cone).\]
 
 We first note that conditioned on the event that both $C, C' \in C_0$, the entropy $H(\x \cup \x')$ will be a constant factor larger than $\frac{H(\x) + H(\x')}{2}$. This can be leveraged to prove that
\begin{equation}\label{eq:eq00}
    Pr[\czero]^2H(\x \cup \x' | \czero, \czero') \geq \constant Pr[\czero]H(\x | \czero).
\end{equation} 

Then, in the event that exactly one of $c, c' \in \czero$ we can show that $H(\x \cup \x') \geq 0.9H(\x)$. Using this property, we will show that

\begin{equation}\label{eq:eq01}
    2Pr[\czero]Pr[\cone]H(\x \cup \x' | \czero, \cone') \geq 1.62 Pr[\cone]H(\x |  \cone).
\end{equation} 

Example 3 discussed in the introduction helped to motivate the decomposition considered in equations (\ref{eq:eq00}) and (\ref{eq:eq01}). In this example, most of the entropy in $H(\x | C)$ comes from the event that $A_1 = 1$ (this corresponds to the event $\cone$). This entropy is dominated by the corresponding event that exactly one of $A_1$ and $B_1$ are equal to 1, which is exactly the conclusion of equation (\ref{eq:eq01}). This example also demonstrates that entropy coming from the term $Pr[\cone]^2 H(\x, \x' | C, C' \in \cone)$ may be small relative to $Pr[\cone]H(\x |\cone)$. In this work we throw this term away, it is non-negative and the sum of the left hand side of (\ref{eq:eq00}) and (\ref{eq:eq01}) are already larger than $H(\x|C)$. However, a tight version of Lemma~\ref{lem:l5} will require a more careful analysis.

We now make the above proof sketch rigorous with the following sequence of lemmas.

\begin{lemma} \label{lem:l1}
    Assume $p, p' \leq \weakbound$. Then $H(p + p' - p p') \geq 1.4 \left( \frac{H(p) + H(p')}{2}\right)$.
\end{lemma}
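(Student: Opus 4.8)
The plan is to reduce the two-variable inequality $H(p+p'-pp') \geq 1.4 \cdot \frac{H(p)+H(p')}{2}$ to a one-variable statement via convexity-type arguments restricted to the small regime $p, p' \leq \weakbound = 0.1$. The natural first move is to show that for fixed $p$ the function $\phi(p') = H(p+p'-pp') - 1.4 \cdot \frac{H(p)+H(p')}{2}$ is concave (or at least that its minimum over $p' \in [0, 0.1]$ is attained at an endpoint), so that it suffices to check the inequality when $p' \in \{0, 0.1\}$ (and by symmetry $p \in \{0, 0.1\}$). If that works, the whole lemma collapses to verifying finitely many numeric inequalities: the corner cases $(0,0)$ (trivial, both sides zero), $(0, 0.1)$ / $(0.1, 0)$, and $(0.1, 0.1)$.

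First I would analyze the boundary case $p'=0$: there the claim becomes $H(p) \geq 1.4 \cdot \frac{H(p)}{2} = 0.7\, H(p)$, which holds trivially with room to spare. So the binding constraint is the symmetric corner $p = p' = 0.1$, where one must check $H(0.1 + 0.1 - 0.01) = H(0.19) \geq 1.4 \cdot H(0.1)$. Numerically $H(0.1) \approx 0.4690$ and $H(0.19) \approx 0.7014$, and $1.4 \times 0.4690 \approx 0.6566 < 0.7014$, so this holds. This suggests the endpoint-reduction strategy is the right skeleton, but I would want to be careful that the function really does attain its minimum at the corners and not in the interior — the mixing term $H(p+p'-pp')$ is concave in $p'$ (since $t \mapsto p + t - pt$ is affine and $H$ is concave), while $-0.7\,H(p')$ is convex, so $\phi(p')$ is a sum of a concave and a convex function and its convexity is not immediate.

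The cleanest rigorous route, therefore, is probably to work with the derivative directly: compute $\frac{d}{dp'}\bigl[H(p+p'-pp')\bigr] = (1-p)\bigl(\log(1-(p+p'-pp')) - \log(p+p'-pp')\bigr)$ and compare it to $0.7 \frac{d}{dp'} H(p') = 0.7\bigl(\log(1-p') - \log(p')\bigr)$, then show that on $[0,0.1]$ the former dominates the latter whenever $p \leq 0.1$ — i.e., that $\phi$ is nondecreasing on the relevant square minus a neighborhood of the origin, or handle the behavior near $0$ separately using $H(x) \sim x\log(1/x)$ asymptotics. Near the origin both sides vanish like $x \log(1/x)$, and the coefficient comparison there ($2(1-p) \geq 1.4$ essentially, coming from the union of two rare events roughly doubling the probability) is exactly what gives headroom; away from the origin a monotonicity/derivative bound finishes it.

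The main obstacle I anticipate is making the interior argument fully rigorous rather than hand-wavy: the function $\phi(p')$ is neither convex nor concave, so one cannot simply invoke Jensen or an endpoint principle without justification, and the derivative comparison involves transcendental expressions in $\log$ that need careful bounding on $[0, 0.1]$. I expect the author handles this either by a clever substitution, by bounding $H$ between a linear interpolant and a quadratic (or between $H$ and $x\log(e/x)$-type bounds), or simply by a careful case analysis splitting $[0,0.1]$ into a region near $0$ (asymptotic/series bounds) and a region bounded away from $0$ (monotonicity via the sign of the derivative). The constants $0.1$ and $1.4$ are loose enough that crude estimates should close the gap, which is consistent with the paper's stated philosophy that the argument "is quite wasteful and surely is far from tight."
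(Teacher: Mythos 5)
Your proposal takes a genuinely different route from the paper, and it has a real gap that you yourself flag but do not close: the endpoint reduction you want to use does not hold in the naive form. For $p>0$ fixed, the function $\phi(p') = H(p+p'-pp') - 0.7\bigl(H(p)+H(p')\bigr)$ has derivative
\[
\phi'(p') = (1-p)\log\frac{1-u}{u} - 0.7\,\log\frac{1-p'}{p'}, \qquad u = p+p'-pp',
\]
and as $p' \to 0^+$ the first term stays bounded (since $u \to p > 0$) while the second term blows up to $+\infty$, so $\phi'(p') \to -\infty$. Thus $\phi$ decreases sharply away from $p'=0$; its minimum over $[0,0.1]$ is in the interior, not at a corner. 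Checking the four corners is therefore not enough, and the ``split into a region near $0$ and a region away from $0$'' plan is left as an unresolved obstacle rather than an argument. You also conflate two different objects: the difference $\phi$ (which vanishes at the origin, so ``minimum at corner'' is vacuous there) and the ratio $f(p,p') = 2H(p+p'-pp')/(H(p)+H(p'))$ (which is the quantity the paper actually bounds and which does attain its minimum at $(0.1,0.1)$). Any rigorous version of your plan would have to analyze the interior critical point, which you do not attempt.

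The paper's proof is much cleaner and avoids all of this by two one-line estimates that reduce the problem to a single variable. First, concavity of $H$ gives $\frac{H(p)+H(p')}{2} \leq H\bigl(\frac{p+p'}{2}\bigr)$, eliminating the separate dependence on $p$ and $p'$ in the denominator. Second, for $p,p' \in [0,0.1]$ one has $p+p'-pp' \geq 0.9(p+p')$ (since $pp' \leq 0.1 p' \leq 0.1(p+p')$), and both arguments stay below $0.5$ where $H$ is increasing, so $H(p+p'-pp') \geq H(0.9(p+p'))$. Setting $s = p+p' \in (0,0.2]$, the ratio $f(p,p')$ is thus bounded below by $g(s) = H(0.9s)/H(0.5s)$, a one-variable function whose minimum on $(0,0.2]$ is at $s=0.2$, giving $g(0.2) = H(0.18)/H(0.1) \approx 1.45 > 1.4$. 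Your numerical check at the corner $(0.1,0.1)$ gives $H(0.19)/H(0.1) \approx 1.496$, which is consistent with the paper's Figure but slightly different from the paper's proved bound of $1.45$, because the paper's estimates are deliberately slightly lossy in exchange for reducing to one variable. The takeaway: the paper trades a small amount of tightness for the ability to dodge the two-variable critical-point analysis entirely, which is exactly the step your proposal leaves open.
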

\begin{proof}
    Note the lemma holds when $p = p' = 0$. We let $D = [0, 0.1] \times [0, 0.1] - \{(0, 0)\}$. Figure~\ref{fig:entropy_ratio} plots the function $f(p, p') = \frac{2 H(p + p' - p p')}{H(p) + H(p')}$ for $(p, p') \in D$ where the lemma can be checked visually. More formally, by concavity of $H$, $\frac{H(p) + H(p')}{2} \leq H\left(\frac{p + p'}{2}\right)$. Additionally, when $0 \leq p, p' \leq 0.1$, we have $p + p' - p p' \geq 0.9(p + p')$. Thus in the given domain, $f(p, p') \geq \frac{H(0.9(p + p'))}{H(0.5(p + p'))}$. The function $g(p) = \frac{H(0.9p)}{H(0.5 p)}$ for $p \in (0, 0.2]$ is minimized at $p = 0.2$. This implies that over the domain, $f(p, p') > g(0.2) = 1.45$.
\end{proof}

\begin{lemma} \label{lem:l2}
  For any $p, p' \in [0, 1]$, $H(p + p' - p p') \geq (1-p)H(p')$.
\end{lemma}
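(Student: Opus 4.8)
The plan is to prove $H(p + p' - pp') \geq (1-p)H(p')$ by interpreting both sides through a conditioning argument, rather than by direct calculus. First I would note the identity $p + p' - pp' = p + (1-p)p'$, so the left-hand side is $H(X \cup X')$ where $X \sim \mathrm{Bernoulli}(p)$ and $X' \sim \mathrm{Bernoulli}(p')$ are independent. The key observation is to condition $X \cup X'$ on the value of $X$: when $X = 0$ (probability $1-p$), the variable $X \cup X'$ equals $X'$, so its conditional entropy is exactly $H(p')$; when $X = 1$ (probability $p$), the variable $X \cup X'$ is deterministically $1$, contributing $0$. Hence
\[
    H(X \cup X' \mid X) = (1-p)H(p').
\]
Then since conditioning cannot increase entropy, $H(X \cup X') \geq H(X \cup X' \mid X) = (1-p)H(p')$, which is exactly the claim.

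Alternatively, one could avoid even invoking ``conditioning reduces entropy'' by using the chain rule in the other direction: $H(X \cup X', X) = H(X) + H(X \cup X' \mid X) \geq H(X \cup X')$ since $H(X \cup X', X) = H(X \cup X') + H(X \mid X \cup X') \geq H(X \cup X')$. Either framing works; I would present the one-line conditioning version since it is cleanest and uses only standard facts about entropy already recalled in the preliminaries.

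There is essentially no obstacle here — the only thing to be slightly careful about is the boundary behavior (e.g.\ $p' = 0$, where $H(p') = 0$ and the inequality is trivial, or $p = 1$, where the right-hand side is $0$), but the information-theoretic argument handles all of $[0,1]^2$ uniformly without case analysis, since the identity $H(X \cup X' \mid X) = (1-p)H(p')$ holds verbatim for all $p, p' \in [0,1]$. The main conceptual point is simply recognizing that $p + p' - pp'$ should be read as the ``or'' of two independent bits, after which the bound falls out of the chain rule.
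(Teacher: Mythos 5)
Your argument is correct and is essentially the paper's proof, which is a one-line application of concavity of $H$: writing $p + p' - pp' = p\cdot 1 + (1-p)p'$, one gets $H(p\cdot 1 + (1-p)p') \geq pH(1) + (1-p)H(p') = (1-p)H(p')$. Your conditioning step $H(X\cup X') \geq H(X\cup X'\mid X) = (1-p)H(p')$ is exactly this concavity inequality applied to the two-point mixture, merely rephrased in information-theoretic language.
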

\begin{proof}
    By concavity of $H$, \[H(p \cdot 1 + (1 - p)p') \geq p H(1) + (1 - p) H(p') = (1-p)H(p').\]
\end{proof}

 For the next lemmas, we use $q$ to denote the distribution of $C$, that is $q(c) = Pr[C = c]$. Additionally $q_{0}$ denotes the distribution of $C$ conditioned on the event that $C \in \czero$. So for $c \in \czero$, $q_{0}(c) = \frac{q(c)}{Pr[C \in \czero]}$.

\begin{figure}
    \includegraphics[width=10cm]{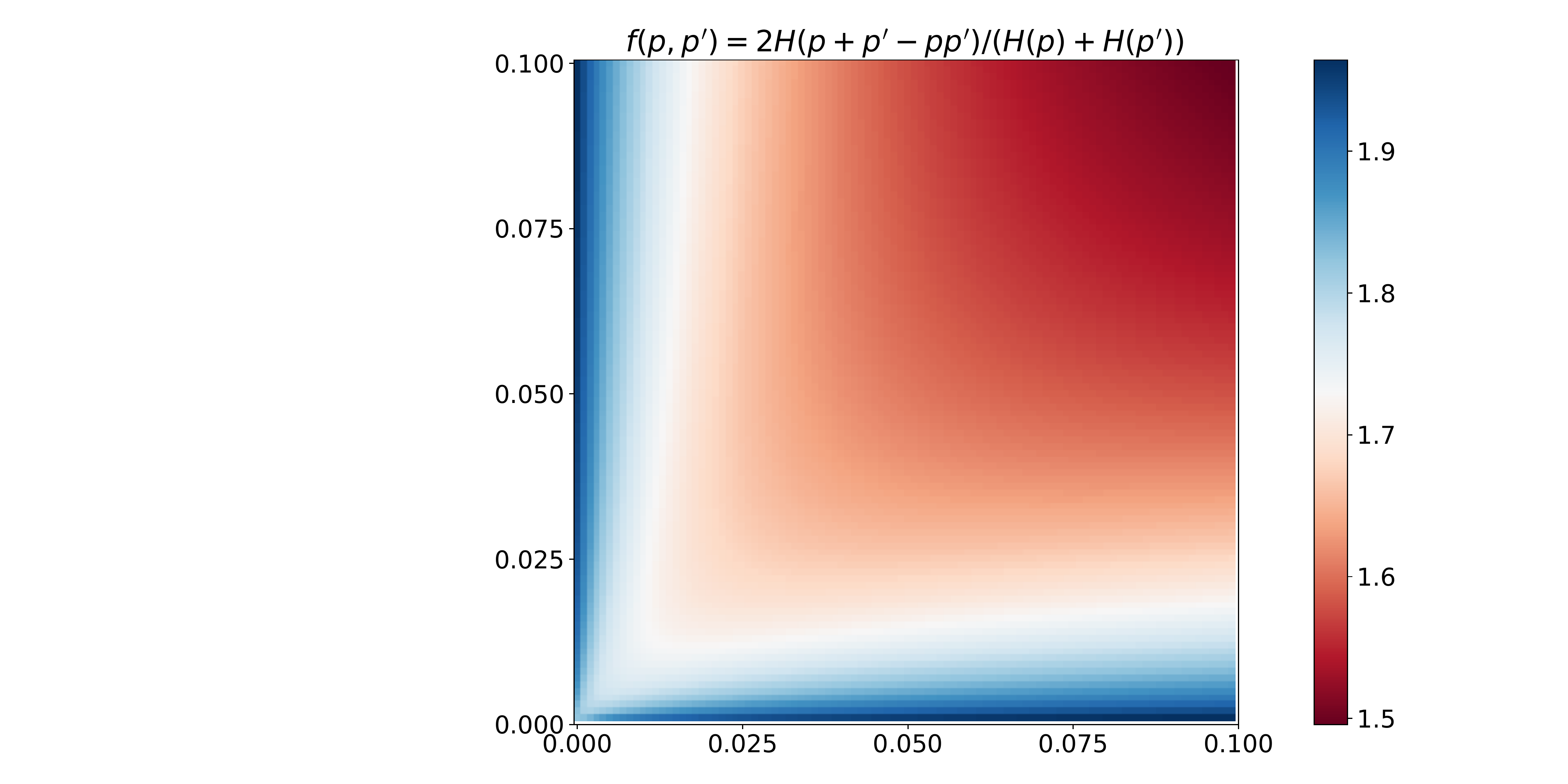}
    \caption{Plotting the function $f(p, p') = \frac{2H(p + p' - p p')}{H(p) + H(p')}$ over $0 \leq p, p' \leq 0.1$. The minimum value of $1.496$ is achieved at $p = p' = 0.1$.}
    \label{fig:entropy_ratio}
\end{figure}

\begin{lemma} \label{lem:l3}
Under the assumption that $\mathbb{E}[\x] \leq 0.01$,
\[Pr[\czero]^2  H(\x \cup \x' | \czero, \czero') \geq \constant Pr[\czero] H(\x | C \in \czero)\]
\end{lemma}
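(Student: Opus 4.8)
The plan is to reduce Lemma~\ref{lem:l3} to a pointwise application of Lemma~\ref{lem:l1}, paying for the gap between the constants $1.4$ and $\constant$ with the bound $Pr[\czero] \ge 0.9$. First I would make the two conditional entropies explicit. Since $C'$ is an iid copy of $C$ and, given $C = c$ and $C' = c'$, the bits $\x$ and $\x'$ are independent Bernoulli variables with means $p_c$ and $p_{c'}$, the variable $\x \cup \x'$ conditioned on $(C = c, C' = c')$ is Bernoulli with mean $p_c + p_{c'} - p_c p_{c'}$. Writing $q_0$ for the law of $C$ given $C \in \czero$, so that $C$ and $C'$ are independent draws from $q_0$ once we condition on $C, C' \in \czero$, this gives
\[
  H(\x \cup \x' \mid \czero, \czero') = \mathbb{E}_{c, c' \sim q_0}\!\left[\hu{p_c}{p_{c'}}\right], \qquad H(\x \mid \czero) = \mathbb{E}_{c \sim q_0}\!\left[H(p_c)\right],
\]
consistent with the decomposition $H(\x \mid C) = Pr[\czero]H(\x \mid \czero) + Pr[\cone]H(\x \mid \cone)$ recorded earlier.

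Next I would invoke Lemma~\ref{lem:l1}: since $c \in \czero$ means precisely $p_c \le \weakbound$, whenever $c, c' \in \czero$ the pair $(p_c, p_{c'})$ lies in the domain of that lemma, giving $\hu{p_c}{p_{c'}} \ge 1.4 \cdot \frac{H(p_c) + H(p_{c'})}{2}$. Taking expectations over independent $c, c' \sim q_0$ and using linearity together with the fact that $c$ and $c'$ both have marginal $q_0$, the averaged right-hand side equals $1.4\,\mathbb{E}_{c \sim q_0}[H(p_c)]$, so $H(\x \cup \x' \mid \czero, \czero') \ge 1.4\, H(\x \mid \czero)$.

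Finally I would multiply through by $Pr[\czero]^2$ and absorb one factor of $Pr[\czero]$ into the constant: by the Markov estimate~(\ref{eq:eq3}) we have $Pr[\cone] \le 0.1$, hence $Pr[\czero] \ge 0.9$ and $1.4\,Pr[\czero] \ge 1.26 = \constant$, so
\[
  Pr[\czero]^2 H(\x \cup \x' \mid \czero, \czero') \ge 1.4\,Pr[\czero]^2 H(\x \mid \czero) \ge \constant\, Pr[\czero] H(\x \mid \czero),
\]
which is the claim. I do not expect a serious obstacle here: the real content sits in Lemma~\ref{lem:l1}, and the only points to watch are the bookkeeping of the $Pr[\czero]$ factors — the asymmetry between the square on the left and the first power on the right is exactly what lets the slack $1.4/\constant = 10/9$ in Lemma~\ref{lem:l1} be traded against $Pr[\czero] \ge 0.9$ — together with reading $H(\x \cup \x' \mid \czero, \czero')$ and $H(\x \mid \czero)$ as the entropies in which $C$ (and $C'$) are still revealed, matching the decomposition of $H(\x \mid C)$.
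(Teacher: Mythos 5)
Your argument is correct and matches the paper's proof in substance: both apply Lemma~\ref{lem:l1} to the pair $(p_c, p_{c'})$ with $c, c' \sim q_0$ to obtain $H(\x \cup \x' \mid \czero, \czero') \geq 1.4\, H(\x \mid \czero)$, and then trade the slack $1.4$ against $Pr[\czero] \geq 0.9$ (from the Markov bound) to absorb the extra factor of $Pr[\czero]$ and land on the constant $\constant$. The only difference is cosmetic—you state the conditional entropies as expectations up front and multiply at the end, whereas the paper starts from the right-hand side and chains inequalities—so this is the same proof.
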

\begin{proof}

\begin{align*}
    Pr[\czero] H(\x | C \in \czero) & = Pr[\czero] \mathbb{E}_{c \sim q_0} H(p_c) \\
    & = \frac{Pr[\czero]}{2} \mathbb{E}_{c \sim q_0}\left[ H(p_c) + \mathbb{E}_{c' \sim q_0} H(p_{c'}) \right] \\
    & = Pr[\czero] \mathbb{E}_{c, c' \sim q_0}\left[  \frac{H(p_c) + H(p_{c'})}{2} \right] \\
 \text{(By Lemma~\ref{lem:l1}) \qquad \qquad}  & \leq \frac{Pr[\czero]}{1.4} \left[ \mathbb{E}_{c, c' \sim q_0} \hu{p_c}{p_{c'}} \right] \\
  \text{($Pr[\czero] \geq 0.9$)  \qquad \qquad}  & \leq \frac{Pr[\czero]^2}{\constant} H(\x \cup \x' | C, C' \in \czero)
\end{align*}

Multiplying both sides by $\constant$ yields the desired result.

\end{proof}

\begin{lemma}\label{lem:l4}
Under the assumption that $\mathbb{E}[\x] \leq 0.01$,
 \[2 Pr[\czero, \cone'] H(\x \cup \x' | \czero, \cone') \geq 1.62 Pr[\cone] H(\x | C \in \cone)\]
\end{lemma}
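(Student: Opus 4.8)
The plan is to bound the left-hand side from below and the right-hand side from above, and then compare. On the right, I would use that $H(\x \mid C \in \cone) = \mathbb{E}_{c \sim q_1}[H(p_c)] \le 1$ trivially (since $H$ is bounded by $1$), but more usefully I expect we need to relate $Pr[\cone] H(\x \mid \cone)$ to something controlled by the measure of $\cone$ alone; the cleanest bound is just $Pr[\cone] H(\x \mid \cone) \le Pr[\cone]$. On the left, the key input is Lemma~\ref{lem:l2}: conditioned on $C = c \in \czero$ and $C' = c' \in \cone$, we have $H(\x \cup \x' \mid C=c, C'=c') = \hu{p_c}{p_{c'}} \ge (1 - p_c) H(p_{c'}) \ge 0.9\, H(p_{c'})$, using $p_c \le 0.1$. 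Averaging over $c \sim q_0$ and $c' \sim q_1$ gives $H(\x \cup \x' \mid \czero, \cone') \ge 0.9\, \mathbb{E}_{c' \sim q_1}[H(p_{c'})] = 0.9\, H(\x \mid \cone)$. Since the pair $(C \in \czero, C' \in \cone)$ and $(C \in \cone, C' \in \czero)$ are symmetric and disjoint events each of probability $Pr[\czero] Pr[\cone]$, the factor of $2$ on the left accounts for both, so $2 Pr[\czero,\cone'] H(\x \cup \x' \mid \czero, \cone') \ge 2\, Pr[\czero] Pr[\cone] \cdot 0.9\, H(\x \mid \cone)$.

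It then remains to check $2 \cdot 0.9 \cdot Pr[\czero] \ge 1.62$, i.e. $Pr[\czero] \ge 0.9$, which is exactly inequality~(\ref{eq:eq3}) rearranged (recall $Pr[\cone] \le 0.1$ so $Pr[\czero] \ge 0.9$). Combining, $2 Pr[\czero,\cone'] H(\x \cup \x' \mid \czero, \cone') \ge 1.62\, Pr[\cone] H(\x \mid \cone)$, as desired. So the whole argument is: apply Lemma~\ref{lem:l2} pointwise with the roles of $p, p'$ chosen so that the small coordinate ($p_c \le 0.1$) sits in the $(1-p)$ factor, average, use the symmetry to absorb the factor $2$ into covering both mixed events, and finish with $Pr[\czero] \ge 0.9$.

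The main obstacle I anticipate is bookkeeping rather than mathematical depth: being careful that $H(\x \cup \x' \mid \czero, \cone')$ really does decompose as $\mathbb{E}_{c \sim q_0, c' \sim q_1}[\hu{p_c}{p_{c'}}]$ — this uses that conditioned on $(C, C')$ the bits $\x, \x'$ are independent Bernoulli with the stated parameters, and that $H(\x \cup \x' \mid C, C')$ is the average of $\hu{p_c}{p_{c'}}$ over the conditioning — and that the factor $2$ is legitimately the sum of the two symmetric cross-terms and not double counting. One subtlety worth flagging: Lemma~\ref{lem:l2} is not symmetric in its two arguments, so one must invoke it as $H(p_c + p_{c'} - p_c p_{c'}) \ge (1 - p_c) H(p_{c'})$ with $p_c$ (the $\czero$-coordinate) playing the role of $p$; this is the only place the constraint $\czero = \{c : p_c \le 0.1\}$ is used, and it is what yields the clean $0.9$ factor. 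No strengthening of Lemma~\ref{lem:l2} is needed here — the slack in $0.9$ versus $1$ is exactly the $0.1$ threshold defining $\czero$.
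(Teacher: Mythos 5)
Your proof is correct and follows essentially the same route as the paper: apply Lemma~\ref{lem:l2} pointwise with $p = p_c$ (so that $p_c \le 0.1$ yields the $0.9$ factor), average over the conditioning to get $H(\x \cup \x' \mid \czero, \cone') \ge 0.9\, H(\x \mid \cone)$, and finish with $Pr[\czero] \ge 0.9$ from Markov. The only cosmetic difference is that the paper carries $(1-p_c)$ through a couple of lines before bounding it by $0.9$, and your parenthetical about the factor of $2$ ``accounting for both'' mixed events is really an explanation of why the lemma is \emph{stated} with a $2$ (anticipating its use in the proof of Lemma~\ref{lem:l5}) rather than a step needed to prove it.
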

\begin{proof}

\begin{align*}
    2 Pr[\czero, \cone'] H(\x \cup \x' | \czero, \cone') & = 2 \sum\limits_{c \in \czero, c' \in \cone} q(c) q(c') \hu{p_c}{p_{c'}} \\
   \text{(by Lemma~\ref{lem:l2}) \qquad \qquad}  & \geq 2 \sum\limits_{c \in \czero, c' \in \cone} q(c) q(c') (1 - p_c) H(p_{c'}) \\
   & = 2\sum\limits_{c \in \czero} q(c)(1 - p_c) \left[\sum\limits_{c' \in \cone} q(c') H(p_{c'})\right] \\
    & = 2 Pr[\cone'] H(\x' | \cone') \sum_{c \in \czero} q(c)(1 - p_c)  \\
 \text{(using $p_c \leq 0.1$) \qquad \qquad}    & \geq 2 Pr[\cone']H(\x | \cone') \sum_{c \in \czero} q(c) 0.9 \\
    & = 1.8 Pr[\czero] Pr[\cone'] H(\x' | \cone') \\
  \text{(using $Pr[\czero] \geq 0.9$) \qquad \qquad}   & \geq 1.62 Pr[\cone'] H(\x' | \cone')
\end{align*}
\end{proof}

We can now quickly finish the proof of Lemma~\ref{lem:l5}.

\begin{proof}

To show that $H(\x \cup \x' | C, C') \geq 1.26 H(\x | C)$, we write $H(\x \cup \x' | C, C')$ as a sum of three disjoint events:

\begin{enumerate}
    \item $Pr[C, C' \in \czero] H(\x \cup \x'| C, C' \in \czero)$
    \item $2Pr[C \in \czero] Pr[C' \in \cone] H(\x \cup \x' | C \in \czero, C' \in \cone)$
    \item $Pr[C, C' \in \cone] H(\x \cup \x'| C, C' \in \cone)$
\end{enumerate}

By Lemma~\ref{lem:l3}, event (1) has higher entropy than $ \constant Pr[C \in \czero] H(\x | C \in \czero)$. By Lemma~\ref{lem:l4}, event (2) has higher entropy than $1.62 Pr[C \in \cone]H(\x | C \in \cone)$. Finally, event (3) has non-negative entropy. Thus $H(\x \cup \x' | C, C') \geq \constant  H(\x | C)$.

\end{proof}

\section{A possible path towards resolving the conjecture} \label{sec:kl}

It is clear that there is more ground to be covered with the information theoretic approach we have initiated in this work. A tight version of Lemma~\ref{lem:l5} would imply a $\bound$ lower bound on the maximum element frequency for union-closed families. Because $\bound < \frac{1}{2}$, additional ideas will be needed to resolve union-closed conjecture. In this section we discuss a potential direction towards this strengthening.

In cases where $p$ is close to $\frac{1}{2}$, the distribution of $A \cup B$ seems to be far from uniform. Thus it may still hold that $|\f \cup \f| > |\f|$\footnote{We use $\f \cup \f$ to denote $\{A \cup B | A, B \in \f\}$.} even though $H(A \cup B) \leq H(A)$. To quantify how far from uniform the distribution $A \cup B$ is, it is useful to consider the KL-divergence $D(A \cup B || A)$. When $A$ is the uniform distribution over a union-closed family $\f$, it holds that\footnote{See \cite{cover1999elements} Theorem 2.6.4.}
\begin{equation} \label{eq:kl}
    D(A \cup B || A) + H(A \cup B) = H(A) = \log( |\f|).
\end{equation}

We can study the quantity $D(A \cup B || A) + H(A \cup B)$ for more general distributions $A$---say if $A$ is not the uniform distribution, or $\f$ is not union-closed. For example, if $A$ denotes a single bit with probability $p$ of being 1, then when $p = 0.5$ it holds exactly that  $D(A \cup B || A) + H(A \cup B)$ = $H(A) = 1.0$. However, if $p < 0.5$ it holds that
\begin{equation} \label{eq:conjeq}
    D(A \cup B || A) + H(A \cup B) > H(A).
\end{equation}
If equation~(\ref{eq:conjeq}) ever holds for a distribution $A$, we can conclude that either $A$ is not the uniform distribution over $\f$ or the distribution $A \cup B$ has support outside of $\f$. 

Thus the union-closed sets conjecture would follow from showing the following:

\begin{conjecture} \label{conj1}
Let $A, B$ be iid samples from a distribution over a family of subsets of $[n]$. Assume that $Pr[i \in A] < 0.5$ for all $i$, and $H(A) > 0$. Then $H(A \cup B) + D(A \cup B || A) > H(A)$.
\end{conjecture}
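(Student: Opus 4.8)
\textbf{Proof proposal for Conjecture~\ref{conj1}.}

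The plan is to mirror the bit-by-bit revealing strategy used for Theorem~\ref{thm:t1}, but to carry the KL-divergence term alongside the entropy term so that the combined quantity $H + D$ can be controlled at each step. Concretely, I would first seek a chain-rule-type decomposition for $D(A \cup B \,\|\, A)$ that is compatible with the entropy chain rule already in play. Writing $U = A \cup B$, one has $D(U \,\|\, A) = \sum_i \mathbb{E}_{U_{<i}}\big[D(U_i \mid U_{<i} \,\|\, A_i \mid U_{<i})\big]$ in the appropriate sense (a chain rule for relative entropy between the two processes, both viewed as distributions on $\{0,1\}^n$ revealed in the same order). Combined with $H(U) = \sum_i H(U_i \mid U_{<i})$, the conjecture would follow from a per-coordinate inequality of the form $H(U_i \mid U_{<i}) + D_i \ge H(A_i \mid A_{<i}) + (\text{slack})$, where $D_i$ is the $i$-th relative-entropy contribution; the slack must be strictly positive whenever $H(A)>0$ under the hypothesis $\Pr[i\in A] < 1/2$ for all $i$.

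The next step is to reduce, exactly as Lemma~\ref{lem:l5} does, to a statement about a single pair of correlated bits: condition on the full histories $A_{<i}, B_{<i}$ (which are iid), and note $A_i, B_i$ are then independent Bernoulli variables with some parameters $p_c, p_{c'}$ determined by the sampled histories $c, c'$. Here the crucial subtlety is that relative entropy, unlike entropy, does \emph{not} only decrease under the extra conditioning on $A_{<i}, B_{<i}$ — data processing makes $D(U_i \mid U_{<i} \,\|\, \cdot)$ smaller than $D(U_i \mid A_{<i}, B_{<i} \,\|\, \cdot)$, which is the wrong direction. So I would instead try to prove the single-bit inequality directly: for $\{p_c\}$ with $\mathbb{E}_c[p_c] = \mu < 1/2$, show
\[
\mathbb{E}_{c,c'}\big[\hu{p_c}{p_{c'}}\big] + \Big(\text{relative-entropy analogue}\Big) \;\ge\; \mathbb{E}_c[H(p_c)],
\]
where the analogue term measures how far the mixture distribution of $X \cup X'$ over the random parameters is from the mixture distribution of $X$. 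The cleanest target is the ``single coordinate, $n=1$'' calculation from Section~\ref{sec:kl}: for one Bernoulli bit with parameter $p<1/2$, equation~(\ref{eq:conjeq}) holds with a computable gap $g(p) := H(2p-p^2) + D(2p-p^2 \,\|\, p) - H(p) > 0$, and the hope is that convexity/Jensen arguments push this through the mixture.

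I expect the main obstacle to be precisely the non-monotonicity of relative entropy under conditioning, which blocks the clean "forget the structure" reduction that made Lemma~\ref{lem:l5} tractable. One must track how $D(A \cup B \,\|\, A)$ behaves when we reveal bits in a fixed order: the term $D(U_i \mid U_{<i} \,\|\, A_i \mid U_{<i})$ compares two Bernoulli distributions whose parameters are themselves \emph{averages} over the posterior on histories given $U_{<i}$, and these averages differ between the $U$-process and the $A$-process. Controlling this requires understanding the joint law of $(A_{<i}, B_{<i})$ given $(A\cup B)_{<i}$, which reintroduces exactly the structure Lemma~\ref{lem:l5} discarded. A secondary difficulty is that the hypothesis $\Pr[i \in A] < 1/2$ is much weaker than $\mathbb{E}[X] \le 0.01$, so the wasteful Markov-inequality split into $\czero, \cone$ is unavailable; the argument would need to be genuinely tight, matching Examples 1 and 2 at $p = \bound$ on the entropy side and only closing the gap via the $D$ term — which is why the paper flags this as a path, not a theorem.
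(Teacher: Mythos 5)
The statement you are attempting to prove is \emph{Conjecture}~\ref{conj1}, not a theorem: the paper offers no proof of it and explicitly presents it only as a speculative direction. More importantly, the paper's own update (dated 11/27/2022) records that Sawin and Ellis have since produced constructions \emph{refuting} this conjecture, so no proof strategy — yours or anyone's — can succeed. The single-coordinate calculation you take as your ``cleanest target'' (that $H(2p-p^2) + D(2p-p^2\,\|\,p) > H(p)$ for a lone Bernoulli bit with $p<1/2$) is genuine, but the conjecture asserts the analogous inequality for arbitrary distributions over $2^{[n]}$, and that generalization fails. Sawin's counterexample in particular shows that, absent further structural assumptions on $A$, the KL term cannot rescue the entropy deficit once $p$ exceeds $\bound$.

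That said, your diagnosis of the obstacles is sound and matches the paper's own caution. You correctly note that relative entropy is not monotone under the extra conditioning on $(A_{<i}, B_{<i})$ — data processing runs the wrong way for $D$ — which destroys the ``forget the history structure'' reduction that made Lemma~\ref{lem:l5} tractable. You also correctly observe that the per-coordinate relative-entropy contribution $D(U_i \mid U_{<i} \,\|\, A_i \mid U_{<i})$ compares Bernoulli parameters that are posterior averages over histories, and that these averages differ between the two processes in a way Lemma~\ref{lem:l5} never needed to control. These are precisely the reasons the paper leaves this as a conjecture rather than a lemma; the counterexamples confirm the obstacles are not merely technical. If you want a live target, the right one is a tight version of Lemma~\ref{lem:l5} (which Chase, Lovett, Sawin, and Alweiss et al.\ obtained, yielding the $\bound$ bound), or Conjecture~\ref{conj1} restricted by additional hypotheses that rule out the Sawin/Ellis constructions.
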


\end{sloppypar}

\section{Conclusion}

We have established the first constant lower bound for the union-closed conjecture by studying the entropy of the union of two iid samples from a family $\f$. The methods presented are strong enough to derive the stronger conclusion that $H(A \cup B) \geq C_p H(a)$ for a constant $C_p > 0$ which depends on $p = \max\limits_i{Pr[A_i = 1]}$. However, we certainly have not derived the strongest possible bound $C_p$. We are hopeful that the approach initiated in this work will lead to a proof of the conjecture. Beyond proving the union-closed conjecture, the following questions could be interesting to consider

\begin{enumerate}

\item Does it hold for any distribution $A$ with $Pr[A_i = 1] \leq p$ for all $i$ that $H(A \cup B) \geq \frac{H(2p - p^2)}{H(p)} H(A)$?
\item Does Conjecture 1 hold?
\item Under what other assumptions on the distributions $A, B$ does it hold that $H(A \cup B) > H(A)$? Suppose for example that for fixed $k$ it holds that for every $X \in \binom{[n]}{k}$, $Pr[X \subseteq A] < p$. How small does $p$ need to be to conclude that $H(A \cup B) > H(A)$?
\end{enumerate}

{\bf Update (11/27/2022)} Shortly after publication of this preprint, three publications appeared which all prove tight versions of our Lemma 1 \cite{chase2022approximate, sawin2022improved, alweiss2022improved}. These results improve the resulting bound on Frankl's conjecture to $\bound \approx .38$. Sawin \cite{sawin2022improved} confirm Question 1 when $p \leq \bound$. However, when $p > \bound$ it only holds that $H(A \cup B) \geq (1 - p)\frac{2}{\sqrt{5} - 1}$. Sawin \cite{sawin2022improved} and Ellis \cite{ellis2022note} provide constructions refuting Conjecture~\ref{conj1}. It is noteworthy that Sawin's construction demonstrates that, without placing additional assumptions on the distribution $A$, incorporating the KL term cannot improve the resulting bound on Frankl's conjecture. 

\section*{Acknowledgement}

The author is grateful to Michael Saks and Swastik Kopparty for enlightening discussions and for reviewing initial versions of this work. Additionally, the author thanks Phil Long for his careful reading and feedback on the manuscript.

\bibliographystyle{plain}
\bibliography{bibliography}

\begin{thebibliography}{10}

\bibitem{polymath}
Polymath11.
\newblock
  \url{https://gowers.wordpress.com/2016/01/21/frankls-union-closed-conjecture-a-possible-polymath-project/
  }.

\bibitem{alweiss2022improved}
Ryan Alweiss, Brice Huang, and Mark Sellke.
\newblock Improved lower bound for the union-closed sets conjecture.
\newblock {\em arXiv preprint arXiv:2211.11731}, 2022.

\bibitem{balla2013union}
Igor Balla, B{\'e}la Bollob{\'a}s, and Tom Eccles.
\newblock Union-closed families of sets.
\newblock {\em Journal of Combinatorial Theory, Series A}, 120(3):531--544,
  2013.

\bibitem{bruhn2015journey}
Henning Bruhn and Oliver Schaudt.
\newblock The journey of the union-closed sets conjecture.
\newblock {\em Graphs and Combinatorics}, 31(6):2043--2074, 2015.

\bibitem{chase2022approximate}
Zachary Chase and Shachar Lovett.
\newblock Approximate union closed conjecture.
\newblock {\em arXiv preprint arXiv:2211.11689}, 2022.

\bibitem{cover1999elements}
Thomas~M Cover and A~Thomas Joy.
\newblock {\em Elements of information theory}.
\newblock John Wiley \& Sons, 1999.

\bibitem{ellis2022note}
David Ellis.
\newblock Note: a counterexample to a conjecture of gilmer which would imply
  the union-closed conjecture.
\newblock {\em arXiv preprint arXiv:2211.12401}, 2022.

\bibitem{frankl1995extremal}
P~Frankl.
\newblock Extremal set systems.
\newblock {\em Handbook of combinatorics}, 2:1293--1329, 1995.

\bibitem{karpas2017two}
Ilan Karpas.
\newblock Two results on union-closed families.
\newblock {\em arXiv preprint arXiv:1708.01434}, 2017.

\bibitem{knill1994graph}
Emanuel Knill.
\newblock Graph generated union-closed families of sets.
\newblock {\em arXiv preprint math/9409215}, 1994.

\bibitem{sawin2022improved}
Will Sawin.
\newblock An improved lower bound for the union-closed set conjecture.
\newblock {\em arXiv preprint arXiv:2211.11504}, 2022.

\bibitem{wojcik1999union}
Piotr W{\'o}jcik.
\newblock Union-closed families of sets.
\newblock {\em Discrete Mathematics}, 199(1-3):173--182, 1999.

\end{thebibliography}

\end{document}